\documentclass[11pt,reqno,a4paper]{amsart}
\usepackage[utf8]{inputenc}
\usepackage[T1]{fontenc}
\usepackage{graphicx}
\usepackage{longtable}
\usepackage{wrapfig}
\usepackage{rotating}
\usepackage[normalem]{ulem}
\usepackage{amsmath}
\usepackage{amssymb}
\usepackage{capt-of}
\usepackage{hyperref}
\subjclass[2020]{Primary 18G35; Secondary 19A99, 19D99, 40G10}
\keywords{Euler characteristic; unbounded chain complex; Cesàro summation method; Hölder summation method; Waldhausen \(K\)-theory; algebraic \(K\)-theory}
\author{Thomas Hüttemann}\address[T.~Hüttemann]{Queen's University Belfast, School of Mathematics and Physics, Mathematical Sciences Research Centre, Belfast BT7~1NN, UK}\email{t.huettemann@qub.ac.uk}
\author{Dan Kucerovsky}\address[D.~Kucerovsky]{University of New Brunswick, Department of Mathematics and Statistics, P.O.~Box 4400, Fredericton, NB, Canada, E3B~5A3}\email{dkucerov@unb.ca}\thanks{Work on this paper started during a research visit of the second-named author to Queen's University Belfast. Financial support of the Fredrik and Catherine Eaton Fund is gratefully acknowledged}
\usepackage{amsthm}
\usepackage{amssymb}
\usepackage{tikz-cd}
\usepackage{mathtools}
\newtheorem{lem}{Lemma}
\newtheorem{prop}[lem]{Proposition}
\newtheorem{thm}[lem]{Theorem}

\theoremstyle{definition}
\newtheorem{dfn}[lem]{Definition}
\newtheorem{xmpl}[lem]{Example}

\numberwithin{lem}{section}
\numberwithin{equation}{lem}
\newcommand{\bN}{\mathbb{N}}
\newcommand{\bZ}{\mathbb{Z}}

\newcommand{\bR}{\mathbb{R}}

\newcommand{\tensor}{\otimes}
\newcommand{\rcof}[1][]{\arrow[r, tail,   "#1"']}
\newcommand{\rcofp}[1][]{\arrow[r, tail,   "#1"]}

\newcommand{\po}{\ar[dr,phantom,"{\tikz{\draw[-] (-0.3,0) -- (-0.3,0.3) -- (0,0.3); \path (-.15,.15) node[circle,fill=black,scale=0.2] {};}}"]}

\usepackage{enumerate}
\date{\today}
\title{The Abel summation method and the infinite Euler characteristic}
\hypersetup{
 pdfauthor={Thomas Hüttemann and Dan Kucerovsky},
 pdftitle={The Abel summation method and the infinite Euler characteristic},
 pdfkeywords={},
 pdfsubject={},
 pdfcreator={Emacs 29.3 (Org mode 9.6.15)}, 
 pdflang={English}}
\begin{document}

\begin{abstract}
We develop a finiteness notion for unbounded chain complexes over a
commutative noetherian integral domain~\(R\) employing the Abel
summation method. The algebraic \(K\)-theory of such complexes is
defined, and shown to be non-trivial. We also exhibit a natural map
from the (usual) algebraic \(K\)-theory of~\(R\) into the new
\(K\)-theory and show that its image contains a canonical infinite
cyclic subgroup.
\end{abstract}

\maketitle
\newcommand{\Alim}{\text{\rm A-}\lim}
\newcommand{\adm}{A-admissible}
\newcommand{\ca} {\chi_{\text{A}}}
\newcommand{\ci} {\chi_{\infty}}
\newcommand{\bc}{\boldsymbol{\bar\chi}}
\newcommand{\CA}{\mathcal{C}_{\text{\upshape A}}(R)}
\newcommand{\CAf}{\CA_{\text{\upshape f}}}
\newcommand{\fP}{\mathfrak{P}}

\section{Introduction}
\label{sec:orgcbc80c5}
The Euler characteristic of a bounded chain complex~\(C\) is defined
as the alternating sum of the ranks of the homology modules of~\(C\):
\begin{equation*}
  \chi(C) = \sum_{n} (-1)^{n} \mathrm{rank}\, H_{n} C
\end{equation*}
The definition does not generalise easily to unbounded chain
complexes. One possibility is to consider the Euler characteristic of
finite sections of~\(C\), weighted inversely proportional to their
length, and pass to the limit as the length is increased. Developing
this idea leads to a definition of an Euler characteristic for
unbounded chain complexes employing the Hölder summation method of
sequences \cite{HK1}.

In the present paper, we demonstrate that the (stronger) Abel
summation method can be used instead. This requires a careful
adjustment of the arguments used in the previous paper. Crucially, to
obtain a category of chain complexes with useful notions of
cofibrations and weak equivalences (allowing the definition of higher
algebraic \(K\)-groups), we need to impose growth conditions on the
ranks of the homology modules which differs noticeably from the
previously used one (Definition~\ref{org3cfd71c} and
\cite[Definition~2.4]{HK1}). The resulting category contains the one
defined in \cite{HK1} using the Hölder summation method, has
uncountable Grothendieck group, and contains a canonical infinite
cyclic subgroup generated by the image of \([R] \in K_{0}(R)\).

\section{Conventions}
\label{sec:org0a069a7}
We denote by~\(\bN\) the set of non-negative integers (so \(0 \in
\bN\)). Throughout the paper, \(R\) stands for an arbitrary
commutative noetherian integral domain with field of
quotients~\(Q\). The rank of an \(R\)-module \(M\) is defined as
\begin{equation*}
  \mathrm{rank}\, M = \dim_{Q} M \tensor_{R} Q \ .
\end{equation*}
By a chain complex \(C\) we mean a sequence of \(R\)-modules
and \(R\)-linear maps
\begin{equation*}
  C \colon
  \ldots \leftarrow C_{n-1}
  \leftarrow C_{n} \leftarrow C_{n+1}
  \leftarrow \ldots
\end{equation*}
such that the composition of any two successive maps in the complex
is~0. All chain complexes in this paper are assumed to be \emph{positive}
in the sense that \(C_{n} = 0\) whenever \(n<0\).

\section{Abel limits and Abel sequences}
\label{sec:org598ca50}
Given a sequence \(\mathbf{a} = (a_{n})_{n \in \bN}\) of real numbers
we define the power series
\begin{equation*}
  \bc \mathbf{a} = \sum_{k=0}^\infty a_{k} x^k \ .
\end{equation*}
We let \(R(\mathbf{a})\) denote the radius of convergence of this
power series, and call this the radius of convergence of the
sequence~\(\mathbf{a}\). 

We are exclusively concerned with the case \(R(\mathbf{a}) \geq
1\). By the usual Cauchy-Hadamard formula, this condition is
equivalent to saying
\begin{equation*}
  \lim \sup_{n \geq 1} \sqrt[n]{|a_{n}|} \leq 1
\end{equation*}
(see \cite[§24, p.76]{zbMATH03003164}); by the ratio test for
convergence, it is also equivalent to the requirement
\begin{equation*}
  \lim_{n \in \bN} \left| \frac{a_{n}}{a_{n+1}} \right| \geq 1 \ ,
\end{equation*}
provided this limit exists.

\begin{lem}
\label{org21dd538}
\begin{enumerate}[{\rm (a)}]
\item \label{org6863a36} If \(R(\mathbf{a}) \geq 1\) and \(R(\mathbf{b})
   \geq 1\), then also \(R(\mathbf{a} \pm \mathbf{b}) \geq 1\) and
\(R(c \cdot \mathbf{a}) \geq 1\) for any constant \(c \in \bR\).
\item \label{org6afd178} If \(R(\mathbf{b}) \geq 1\) and \(|\mathbf{a}|
   \leq |\mathbf{b}|\) , then \(R(\mathbf{a}) \geq 1\).
\end{enumerate}
\end{lem}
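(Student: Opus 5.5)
The natural tool is the Cauchy--Hadamard characterisation recalled just before the statement: \(R(\mathbf{a}) \geq 1\) holds if and only if \(\limsup_{n} \sqrt[n]{|a_{n}|} \leq 1\). Rather than manipulate \(\limsup\)s of \(n\)-th roots of sums directly, I will use the equivalent reformulation that \(R(\mathbf{a}) \geq 1\) if and only if the power series \(\bc\mathbf{a}\) converges absolutely for every real \(x\) with \(|x| < 1\). Equivalently, \(\sum_{k} |a_{k}| r^{k} < \infty\) for all \(0 \leq r < 1\). This is the standard fact that a power series converges absolutely strictly inside its disc of convergence and diverges outside it.

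For part~(\ref{org6afd178}), fix \(0 \leq r < 1\). Since \(|a_{k}| r^{k} \leq |b_{k}| r^{k}\) for every \(k\), the comparison test gives \(\sum_{k} |a_{k}| r^{k} \leq \sum_{k} |b_{k}| r^{k} < \infty\). Hence \(\bc\mathbf{a}\) converges absolutely on the open unit disc, so \(R(\mathbf{a}) \geq 1\). Alternatively, this follows immediately from \(\sqrt[n]{|a_{n}|} \leq \sqrt[n]{|b_{n}|}\) and the monotonicity of \(\limsup\).

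For part~(\ref{org6863a36}), again fix \(0 \leq r < 1\). The triangle inequality \(|a_{k} \pm b_{k}| \leq |a_{k}| + |b_{k}|\) gives
\begin{equation*}
  \sum_{k} |a_{k} \pm b_{k}| r^{k} \leq \sum_{k} |a_{k}| r^{k} + \sum_{k} |b_{k}| r^{k} < \infty \ ,
\end{equation*}
so \(R(\mathbf{a} \pm \mathbf{b}) \geq 1\). One could also reduce this to part~(\ref{org6afd178}) by comparing \(\mathbf{a} \pm \mathbf{b}\) with the sequence \(|\mathbf{a}| + |\mathbf{b}|\), whose radius of convergence is at least~\(1\) by the same estimate. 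For scalars, \(\sum_{k} |c\, a_{k}| r^{k} = |c| \sum_{k} |a_{k}| r^{k} < \infty\), so \(R(c \cdot \mathbf{a}) \geq 1\). When \(c = 0\) the series is identically zero and has infinite radius of convergence, which is also fine.

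There is no real obstacle here. The only point needing care is the justification of the reformulation ``\(R(\mathbf{a}) \geq 1\) if and only if there is absolute convergence for all \(|x| < 1\)''. I would cite this from the same reference \cite{zbMATH03003164} used for Cauchy--Hadamard, noting that convergence at some \(x_{0}\) forces absolute convergence for \(|x| < |x_{0}|\), and that divergence at any point with \(|x| < 1\) would force \(R(\mathbf{a}) < 1\).
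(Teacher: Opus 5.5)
Your proposal is correct and takes the paper's approach. The paper also proves (b) from absolute convergence inside the interval of convergence, which is exactly your comparison argument, and it treats (a) as clear, which your triangle-inequality estimate fills in.
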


\begin{proof}
Part~(\ref{org6863a36}) is clear. Part~(\ref{org6afd178}) holds since a
power series converges absolutely on its (open) interval of
convergence.
\end{proof}

\begin{dfn}
We define the \emph{Abel limit}, or \emph{A-limit}, of the sequence \(\mathbf{a}
= (a_{n})_{n \in \bN}\) by the expression
\begin{equation*}
  \Alim \mathbf{a} = \lim_{x \to 1^{-}} (1-x) \bc \mathbf{a} \ ,
\end{equation*}
provided this limit exists (as a finite real number); for this to make
sense, we implicitly assume \(R(\mathbf{a}) \geq 1\). If \(\Alim
\mathbf{a}\) is defined we say that \(\mathbf{a}\) is an \emph{Abel
sequence}. If an Abel sequence has Abel limit~0, the sequence is
called an \emph{Abel null} sequence.
\end{dfn}

\begin{xmpl}
\label{org2ff51a2} Suppose that the sequence \(\mathbf{a}\) alternates
between the values \(s\) and~\(t\) so that \(a_{2n} = s\) and
\(a_{2n+1} = t\) for all~\(n\). Then
\begin{equation*}
  \bc(\mathbf{a}) = s + tx + sx^2 + tx^3 + \ldots
  = \frac{s+tx}{1-x^2}
\end{equation*}
so that the Abel limit
\begin{equation*}
  \Alim \mathbf{a} = \lim_{x \to 1^{-}} (1-x) \cdot \frac{s+tx}{1-x^2}
  = \lim_{x \to 1^{-}} \frac{s+tx}{1+x} = \frac{s+t}{2}
\end{equation*}
is the arithmetic mean of \(s\) and~\(t\).
\end{xmpl}

\begin{lem}
\label{org44904eb} Let \(\mathbf{a}\) be a sequences possessing an
Abel limit, and let \(\mathbf{b}\) be obtained from~\(\mathbf{a}\) by
inserting, deleting or modifying finitely many terms. Then
\(\mathbf{b}\) also possesses an Abel limit, and \(\Alim \mathbf{a} =
\Alim \mathbf{b}\).
\end{lem}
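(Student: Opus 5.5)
The plan is to reduce everything to two elementary operations and check each separately. Any sequence obtained from~\(\mathbf{a}\) by finitely many insertions, deletions and modifications can also be reached by finitely many steps, each of which is one of the following:
\begin{enumerate}[{\rm (i)}]
\item changing a single term;
\item a shift, namely inserting or deleting a single term at the front of the sequence.
\end{enumerate}
Indeed, inserting a term in position~\(k\) amounts to prepending a term and then modifying the first \(k+1\) terms appropriately. Deletion is handled in the same way. So it suffices to show that each of (i) and~(ii) preserves the existence of the Abel limit and its value.

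For (i), the sequence \(\mathbf{b}-\mathbf{a}\) has only finitely many non-zero terms. Hence \(\bc(\mathbf{b}-\mathbf{a})\) is a polynomial~\(p(x)\), with infinite radius of convergence. By Lemma~\ref{org21dd538}(\ref{org6863a36}) we get \(R(\mathbf{b}) \geq 1\), and for \(0<x<1\) we have \(\bc\mathbf{b} = \bc\mathbf{a} + p(x)\). Since \((1-x)p(x) \to 0\) as \(x \to 1^{-}\), the two Abel limits coincide.

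For (ii), let \(\mathbf{b} = (c, a_0, a_1, \ldots)\). Then \(\bc\mathbf{b} = c + x\,\bc\mathbf{a}\), and a power series and its shift have the same radius of convergence. This gives
\begin{equation*}
  (1-x)\bc\mathbf{b} = (1-x)c + x\,(1-x)\bc\mathbf{a} \ ,
\end{equation*}
which tends to \(\Alim\mathbf{a}\) as \(x \to 1^{-}\). Deletion of the first term is the converse direction. Here \(\bc\mathbf{a} = a_0 + x\,\bc\mathbf{b}\), so for \(0<x<1\) we have
\begin{equation*}
  (1-x)\bc\mathbf{b} = \bigl((1-x)\bc\mathbf{a} - (1-x)a_0\bigr)/x \ ,
\end{equation*}
which again converges to \(\Alim\mathbf{a}\). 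This relies on \(x \to 1\), so dividing by~\(x\) is harmless.

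No step is a real obstacle. The only point needing care is the bookkeeping in the reduction to (i) and~(ii), in particular that radius of convergence at least~1 is preserved at every step. This follows from Lemma~\ref{org21dd538} together with the fact that a shift does not change the radius of convergence.
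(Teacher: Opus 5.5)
Your proof is correct and uses the same key identity as the paper, namely $\bc\mathbf{a} = a_0 + x\,\bc\mathbf{b}$ for the sequence $\mathbf{b}$ obtained by deleting the first term. The paper reduces everything to this one shift case, stated as an equivalence so that two sequences with a common tail (after removing initial segments) are covered; you instead add a separate ``modify one term'' step handled via the polynomial $\bc(\mathbf{b}-\mathbf{a})$, which is an equally valid way to do the bookkeeping.
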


\begin{proof}
It is enough to discuss a special case:
\begin{quote}
  \textit{Suppose that \(\mathbf{b}\) is obtained from \(\mathbf{a}\) by
  deleting the first term such that \(b_n = a_{n+1}\). Then
  \(\Alim\mathbf{a}\) exists if and only if \(\Alim \mathbf{b}\) exists,
  in which case the limits agree.}
\end{quote}
(In the terminology of summation theory, this says that Abel limits
are a \emph{translative} or \emph{regular} summation method
\cite[p.115]{zbMATH03060297}.) But this follows readily from the power
series identity \(a_0 + x \cdot \bc \mathbf{b} = \bc \mathbf{a}\).
\end{proof}

\begin{lem}
\label{org8f22aeb}
\begin{enumerate}[{\rm (a)}]
\item \label{org62d69ce} The Abel limit is linear such that \(\Alim
   (\mathbf{a} + c \cdot \mathbf{b}) = \Alim\mathbf{a} + c \cdot \Alim
   \mathbf{b}\) for all Abel sequences \(\mathbf{a}\)
   and~\(\mathbf{b}\), and all \(c \in \bR\).
\item \label{org9c69999} If \(|\mathbf{a}| \leq \mathbf{b}\) and
\(\mathbf{b}\) is Abel null
then likewise \(\mathbf{a}\) is Abel null.
\end{enumerate}
\end{lem}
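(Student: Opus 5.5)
For part~(\ref{org62d69ce}) I will argue directly at the level of power series. By Lemma~\ref{org21dd538}(\ref{org6863a36}), the sequence \(\mathbf{a} + c\cdot\mathbf{b}\) again has radius of convergence at least~\(1\). On the open interval \((-1,1)\) we may add convergent power series termwise, so
\begin{equation*}
  \bc(\mathbf{a} + c\cdot \mathbf{b}) = \bc\mathbf{a} + c\cdot \bc\mathbf{b}
\end{equation*}
for \(|x|<1\). Multiplying by \((1-x)\) and letting \(x \to 1^{-}\), linearity of ordinary limits gives both the existence of \(\Alim(\mathbf{a}+c\cdot\mathbf{b})\) and the claimed formula. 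This part is routine.

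For part~(\ref{org9c69999}), first note that \(R(\mathbf{b}) \geq 1\) because \(\mathbf{b}\) is an Abel sequence. Lemma~\ref{org21dd538}(\ref{org6afd178}) then yields \(R(\mathbf{a}) \geq 1\), so \(\bc\mathbf{a}\) converges absolutely on \((-1,1)\). The hypothesis \(|\mathbf{a}| \leq \mathbf{b}\) forces \(b_k \geq 0\) for all~\(k\). For \(0 \leq x < 1\) I will estimate termwise:
\begin{equation*}
  \bigl|(1-x)\,\bc\mathbf{a}(x)\bigr| \leq (1-x)\sum_{k} |a_k| x^k \leq (1-x)\sum_k b_k x^k = (1-x)\,\bc\mathbf{b}(x).
\end{equation*}
Here \(1-x \geq 0\) and \(x^k \geq 0\) are used, which is why we restrict to \(x \in [0,1)\); this is harmless since only the left limit at~\(1\) matters. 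As \(x \to 1^{-}\) the right-hand side tends to \(\Alim\mathbf{b} = 0\). The squeeze principle then gives \((1-x)\,\bc\mathbf{a}(x) \to 0\), so \(\mathbf{a}\) is Abel null.

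The only point needing care is the role of nonnegativity in part~(\ref{org9c69999}). We need \(x \geq 0\) to compare the two series term by term, and we need the absolute convergence supplied by Lemma~\ref{org21dd538}(\ref{org6afd178}) to justify the first inequality. Beyond that, I expect no real obstacle.
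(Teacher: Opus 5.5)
Your proposal is correct and follows essentially the same route as the paper. Part (a) is treated as routine linearity. For part (b) the paper also gets \(R(\mathbf{a}) \geq 1\) from Lemma~\ref{org21dd538}(\ref{org6afd178}) and then squeezes \(\bigl|(1-x)\,\bc\mathbf{a}\bigr| \leq (1-x)\,\bc|\mathbf{a}| \leq (1-x)\,\bc\mathbf{b} \to 0\) on \((0,1)\), exactly as you do.
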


\begin{proof}
Part~(\ref{org62d69ce}) is clear. Part~(\ref{org9c69999}): We know
\(R(\mathbf{a}) \geq 1\) by Lemma~\ref{org21dd538}~(\ref{org6afd178}). The
inequalities \(|a_i| \leq b_i\) imply \(\bc |\mathbf{a}| \leq \bc
\mathbf{b}\) for \(x \in (0,1)\). Thus
\begin{equation*}
  0 \leq \left| (1-x) \bc \mathbf{a} \right|
  \leq (1-x) \bc |\mathbf{a}|
  \leq (1-x) \bc \mathbf{b}
  \quad \to 0 \qquad \text{as \(x \to 1^{-}\)}
\end{equation*}
whence \(\Alim \mathbf{a} = 0\).
\end{proof}

We will also have occasion to consider sequences which may not have an
Abel limit but are such that the function represented by its
associated power series do not grow too fast when \(x\) approaches~1
along the interval \((0,1)\).

\begin{dfn}
\label{org3cfd71c} We say that a sequence
\(\mathbf{a}\) with \(R(\mathbf{a}) \geq 1\) is \emph{of controlled growth}
if its associated power series growths sub-quadratically as \(x \to
1^{-}\); in symbols, we require \((1-x)^2 \cdot \bc \mathbf{a} \to 0\)
as \(x \to 1^{-}\). We say that \(\mathbf{a}\) is \emph{of controlled
absolute growth} if \(|\mathbf{a}|\) is of controlled growth, that is,
if \((1-x)^2 \cdot \bc |\mathbf{a}| \to 0\) as \(x \to 1^{-}\).
\end{dfn}

Note that if \(|\mathbf{a}|\) possess an Abel limit, then
\(\mathbf{a}\) is of controlled absolute growth.

\begin{xmpl}
The sequence \(\mathbf{a}\) with \(a_n = (-1)^{n} (n+1)(n+2)\) is of
controlled growth. Indeed,
\begin{equation*}
  \bc \mathbf{a} = 2 - 6x + 12x^2 - 20x^3 + \ldots = \frac{2}{(1+x)^3}
\end{equation*}
so that \((1-x)^2 \cdot \bc \mathbf{a} \to 0\) as \(x \to 1^{-}\). The
sequence \(\mathbf{a}\) is, however, \emph{not} of controlled absolute
growth since
\begin{equation*}
  \bc |\mathbf{a}| = 2 + 6x + 14x^2 + 20x^3 + \ldots = \frac{2}{(1-x)^3}
\end{equation*}
whence \((1-x)^2 \cdot \bc |\mathbf{a}| \to \infty\) as \(x \to
1^{-}\).
\end{xmpl}

The following statements will be of use later on:

\begin{lem}
\label{orgbed5e77} Let \(\mathbf{a}\) and~\(\mathbf{b}\) be sequences with
\(R(\mathbf{a}) \geq 1\) and \(R(\mathbf{b}) \geq 1\).
\begin{enumerate}[{\rm (a)}]
\item \label{orgc8423e5} If \(0 \leq \mathbf{a} \leq \mathbf{b}\) and
\begin{equation*}
  \lim_{x \to 1^{-}} (1-x)^2 \cdot \bc \mathbf{b} = 0
\end{equation*}
then also
\begin{equation*}
  \lim_{x \to 1^{-}} (1-x)^2 \cdot \bc \mathbf{a} = 0 \ .
\end{equation*}
\item \label{orga0c1b40} If \((1-x)^2 \cdot \bc |\mathbf{a}| \to 0\)
as \(x \to 1^{-}\), then \((1-x)^2 \cdot \bc \mathbf{a} \to 0\) as
well. In other words, controlled absolute growth implies controlled
growth.
\end{enumerate}
\qed
\end{lem}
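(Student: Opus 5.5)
Both parts come down to comparing power series termwise on the interval \((0,1)\), exactly as in the proof of Lemma~\ref{org8f22aeb}~(\ref{org9c69999}). The only point needing care is that the series in question really converge there. In part~(\ref{orgc8423e5}) convergence of \(\bc\mathbf{a}\) on \((0,1)\) is guaranteed by the hypothesis \(R(\mathbf{a})\geq 1\), or alternatively by Lemma~\ref{org21dd538}~(\ref{org6afd178}) since \(|\mathbf{a}|\leq|\mathbf{b}|\). In part~(\ref{orga0c1b40}) we need that \(\bc|\mathbf{a}|\) converges on \((0,1)\). This holds because a power series converges absolutely on its open interval of convergence, and it also follows from the Cauchy--Hadamard formula, since \(\mathbf{a}\) and \(|\mathbf{a}|\) have the same \(\limsup \sqrt[n]{|a_n|}\).

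For part~(\ref{orgc8423e5}), fix \(x\in(0,1)\). All terms \(a_k x^k\) and \(b_k x^k\) are non-negative, and \(a_kx^k\leq b_kx^k\) for each \(k\). Summing, we get \(0\leq \bc\mathbf{a}(x)\leq \bc\mathbf{b}(x)\). Multiplying by the positive factor \((1-x)^2\) gives
\begin{equation*}
  0 \leq (1-x)^2\,\bc\mathbf{a}(x) \leq (1-x)^2\,\bc\mathbf{b}(x) .
\end{equation*}
The right-hand side tends to~\(0\) as \(x\to1^{-}\) by hypothesis, so the squeeze principle gives \((1-x)^2\bc\mathbf{a}\to 0\).

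For part~(\ref{orga0c1b40}), again fix \(x\in(0,1)\) and apply the triangle inequality to the absolutely convergent series:
\begin{equation*}
  \bigl|(1-x)^2\,\bc\mathbf{a}(x)\bigr| \leq (1-x)^2\sum_{k} |a_k| x^k = (1-x)^2\,\bc|\mathbf{a}|(x) .
\end{equation*}
The right-hand side tends to~\(0\) by assumption, hence so does \((1-x)^2\bc\mathbf{a}\).

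There is no real obstacle here. The one step to state explicitly is the justification that the relevant series converge on \((0,1)\), so that the termwise inequalities may be summed. That is settled by the radius-of-convergence remarks in the first paragraph.
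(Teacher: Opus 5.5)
Your proof is correct and is the argument the paper intends. The paper states this lemma with \qed and gives no proof; the implicit reasoning is the same termwise comparison of power series on \((0,1)\), followed by a squeeze, that appears in the proof of Lemma~\ref{org8f22aeb}~(\ref{org9c69999}). Your remark that \(\bc|\mathbf{a}|\) converges on \((0,1)\) because \(R(|\mathbf{a}|) = R(\mathbf{a})\) is correct, though slightly redundant, since the hypothesis of part~(\ref{orga0c1b40}) already presupposes it.
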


\section{The Abel method version of the infinite Euler characteristic}
\label{sec:orgb777705}
\label{org29e20d3}
For a positive chain complex~\(C\) of \(R\)-modules, we define the
sequence \(\mathbf{H}C\) by setting
\begin{equation*}
  (\mathbf{H}C)_{n} = (-1)^n \mathrm{rank}\, H_{n}C \ ,
\end{equation*}
and the power series \(\bc(C)\) by the assignment
\begin{equation*}
  \bc(C) = \bc (\mathbf{H}C) \ .
\end{equation*}
Its radius of convergences is called the radius of convergence
of~\(C\) and will be denoted \(R(C)\). Thus \(R(C) = R(\mathbf{H}C)\)
by definition.

\begin{dfn}
We call \(C\) an \emph{Abel complex} if \(\mathbf{H}C\) is an Abel
sequence. We say that \(C\) is \emph{of controlled absolute growth} if the
sequence \(\mathbf{H}C\) is of controlled absolute growth.

The category with objects all Abel complexes of controlled absolute
growth and morphisms all chain maps between them is denoted~\(\CA\).
\end{dfn}

\begin{dfn}
Let \(C\) be an Abel complex. The \emph{Abel method infinite Euler
characteristic} of~\(C\), or \emph{AE characteristic} of~\(C\) for short,
is defined as
\begin{equation*}
  \ca C = \Alim \mathbf{H}C \ .
\end{equation*}
\end{dfn}

The next Proposition provides some justification for the name "Euler
characteristic". Informally, the AE characteristic generalises the
idea of computing the Euler characteristic of an unbounded complex by
computing the (classical) Euler characteristic of successively longer
segments of the complex, weighted inversely proportional to the
length.

\begin{prop}
\label{org09c196b} If \(C\) is a chain complex such that its Hölder
method infinite Euler characteristic \(\chi_{H} C\) of \cite{HK1} is
defined, then \(C\) is an Abel complex and the equality \(\ca C =
\chi_H C\) holds.
\end{prop}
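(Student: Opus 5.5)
The plan is to reduce the statement to the classical fact that Abel summation is stronger than (and consistent with) Hölder summation. First I would unwind the definition of \(\chi_H C\) from \cite{HK1}. It is the Hölder limit of the sequence \(\mathbf{H}C\). In the convention where the \(n\)-th partial Euler characteristic is weighted by \(1/(n+1)\), this means the following. Set
\begin{equation*}
  s_n = \sum_{k=0}^{n} (-1)^k \mathrm{rank}\, H_k C \ ,
\end{equation*}
so \(s_n\) is the Euler characteristic of the truncation of \(C\). Then \(\chi_H C\) is the limit of some iterated Cesàro (Hölder) mean of the sequence \((s_n/(n+1))_n\), or equivalently the \(H_p\)-limit of \(\mathbf{H}C\) for some order \(p\). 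Only existence of this limit for some finite \(p\) will be used; I would check against the precise definition in \cite{HK1} which order and normalisation are meant. Any finitely many initial terms are harmless by Lemma~\ref{org44904eb}.

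Next I invoke the classical theorem that every Hölder summable sequence is Abel summable to the same value; see Hardy, \emph{Divergent Series}, or \cite{zbMATH03060297}. The argument I would sketch goes as follows.

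\begin{enumerate}[(1)]
\item Hölder summability of order \(p\) is equivalent to Cesàro summability of order \(p\) with the same value (Knopp--Schnee). So it suffices to treat Cesàro means \((C,p)\).
\item If \(\sigma^{(p)}_n\) denotes the \((C,p)\) means of a sequence \(\mathbf{a}\), then the power series identity
\begin{equation*}
  (1-x)\,\bc\mathbf{a} = (1-x)^{p+1}\sum_{n} \binom{n+p}{p}\,\sigma^{(p)}_n\, x^n
\end{equation*}
holds for \(|x|<1\). Here the "limit" of the sequence \(\mathbf{a}\) corresponds to the summed series with terms \(a_n - a_{n-1}\), which is why a single factor \((1-x)\) appears on the left.
\item The kernel \((1-x)^{p+1}\binom{n+p}{p}x^n\) is nonnegative and sums to \(1\) in \(n\). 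For each fixed \(n\) it tends to \(0\) as \(x\to1^-\). So it is a regular (Toeplitz) weighting, and \(\sigma^{(p)}_n\to L\) forces the right-hand side to tend to \(L\).
\item Convergence of the series, i.e.\ \(R(\mathbf{H}C)\ge1\), follows because bounded means \(\sigma^{(p)}_n\) give polynomial growth of \(a_n\). Hence \(\limsup\sqrt[n]{|a_n|}\le 1\).
\end{enumerate}

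These steps show that \(\mathbf{H}C\) is an Abel sequence with \(\Alim \mathbf{H}C = \chi_H C\), i.e.\ \(C\) is an Abel complex and \(\ca C=\chi_H C\).

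The main obstacle is bookkeeping rather than analysis. I must match the normalisation in \cite{HK1}, where the length-weighted partial Euler characteristics \(s_n/(n+1)\) are formed, with the "sequence limit" form \(\lim(1-x)\bc\mathbf{a}\) used here. The clean route is to observe that \(s_n/(n+1)\) is the \((C,1)\) mean of \(\mathbf{H}C\). Then the Hölder limit of order \(p\) of \(\mathbf{H}C\) in the sense of \cite{HK1} is a genuine \(H_p\) limit of the sequence \(\mathbf{H}C\), and the classical inclusion \(H_p\subseteq\) Abel applies directly. If \cite{HK1} requires an additional growth hypothesis, it is not needed here, since only the existence of the limit enters.
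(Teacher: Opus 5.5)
Your proposal is correct and follows essentially the paper's argument: the paper's proof consists only of citing the classical theorem that Abel summation is stronger than and consistent with Hölder summation (Knopp, \S55.II). Your sketch just unpacks that theorem — the Knopp--Schnee reduction to Cesàro means, the nonnegative kernel \((1-x)^{p+1}\binom{n+p}{p}x^n\) summing to \(1\), and the polynomial growth bound that gives \(R(\mathbf{H}C)\geq 1\).
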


\begin{proof}
This is true since the Abel summation method is stronger than and
consistent with the Hölder summation method, see \cite[§55.II, p111]{zbMATH03139022} and the references given there.
\end{proof}

If \(C\) is a bounded complex of finitely generated \(R\)-modules so
that its classical Euler characteristic \(\chi(C)\) is defined, then
\(C\) has only finitely many non-vanishing homology modules so that
\(\ca C = 0\). Thus the AE characteristic is of interest only where
the classical Euler characteristic cannot be employed.

\begin{xmpl}
Let \(C\) be the singular chain complex with coefficients in~\(R\) of
the infinite one-point union \(\bigvee_{n \geq 1} S^{n}\) of
positive-dimensional spheres. Then \((\mathbf{H}C)_{n} = (-1)^n\) for
all \(n \in \bN\). Consequently, \(\bc(C) = \sum_{n \in \bN} (-1)^n
x^{n} = 1/(1+x)\) and \(\bc |\mathbf{H}C| = 1/(1-x)\) for \(|x|<1\) so
that
\begin{gather*}
  \ca(C) = \lim_{x \to 1^{-}} (1-x) \bc(C) =
  \lim_{x \to 1^{-}} (1-x) \cdot \frac{1}{1+x} = 0
  \intertext{and}
  (1-x)^2 \cdot \bc |\mathbf{H}C| = \frac{(1-x)^2}{1-x} = 1-x
  \to 0 \text{ as } x \to 1^{-} \ .
\end{gather*}
In particular, \(C\) is an Abel complex of controlled absolute growth.
\end{xmpl}

\begin{xmpl}
Let \(C\) be the singular chain complex with coefficients in~\(R\) of
the infinite one-point union \(\bigvee_{n \geq 1} S^{2n}\) of
even-dimensional spheres. Then \((\mathbf{H}C)_{2n} = 1\) and
\((\mathbf{H}C)_{2n+1} = 0\) for all \(n \in \bN\). Consequently,
\(\bc(C) = \sum_{n \in \bN} x^{2n} = 1/(1-x^2) = \bc |\mathbf{H}C|\)
for \(|x|<1\) so that
\begin{gather*}
  \ca(C) = \lim_{x \to 1^{-}} (1-x) \bc(C) =
  \lim_{x \to 1^{-}} (1-x) \cdot \frac{1}{1-x^2} =
  \lim_{x \to 1^{-}} \frac{1}{1+x} = \frac12
  \intertext{and}
  (1-x)^2 \cdot \bc |\mathbf{H}C| = \frac{(1-x)^2}{1-x^2} = \frac{1-x}{1+x}
  \to 0 \text{ as } x \to 1^{-} \ .
\end{gather*}
In particular, \(C\) is an Abel complex of controlled absolute growth.
\end{xmpl}

\begin{xmpl}
Let \(C\) be the singular chain complex with coefficients in~\(R\) of
an infinite one-point union \(\bigvee_{n \geq 1} \bigvee_{k=0}^{n}
S^{n}\) of \(n+1\) copies of the \(n\)-dimensional sphere, for all \(n
\geq 1\). Then \((\mathbf{H}C)_{n} = (-1)^{n} (n+1)\) for all \(n \in
\bN\). Consequently, \(\bc(C) = \sum_{n \in \bN} (-1)^n (n+1)x^{n} =
1/(1+x)^2\) and \(\bc |\mathbf{H}C| = 1/(1-x)^2\) for \(|x|<1\) so
that
\begin{gather*}
  \ca(C) = \lim_{x \to 1^{-}} (1-x) \bc(C) =
  \lim_{x \to 1^{-}} (1-x) \cdot \frac{1}{(1+x)^2} = 0
  \intertext{and}
  (1-x)^2 \cdot \bc |\mathbf{H}C| = \frac{(1-x)^2}{(1-x)^2} = 1 \not\to 0
  \text{ as } x \to 1^{-} \ .
\end{gather*}
In particular, \(C\) is an Abel complex but \emph{not} of controlled
absolute growth.
\end{xmpl}

\section{Admissible short exact sequences}
\label{sec:org49684a9}
A short exact sequence \(\mathcal{S} \colon 0 \to A \to B \to C \to
0\) of positive chain complexes gives rise to a long exact sequence of
homology modules with connecting homomorphisms \(\delta_{n+1} \colon
H_{n+1}C \to H_{n} A\), and thence to exact sequences
\begin{equation*}
  0 \to \mathrm{im}\, \delta_{n+1} \to H_{n} A \to H_{n} B \to H_{n} C
  \to \mathrm{im}\, \delta_{n} \to 0
\end{equation*}
which in turn yield the relations
\begin{equation*}
  \mathrm{rank}\, \mathrm{im}\, \delta_{n+1} +
  \mathrm{rank}\, \mathrm{im}\, \delta_{n} =
  \mathrm{rank}\, H_{n} A -
  \mathrm{rank}\, H_{n} B +
  \mathrm{rank}\, H_{n} C \ .
\end{equation*}
Multiplying by \((-1)^n x^n\) and adding up establishes the power
series identity
\begin{equation}
\label{org338421f}
 (1-x) \cdot \bc (\boldsymbol{\delta} \mathcal{S}) =
  \bc (A) - \bc (B) + \bc (C)
\end{equation}
where we have used the sequence \(\boldsymbol{\delta} \mathcal{S} =
\big( (-1)^n \cdot \mathrm{rank}\, \mathrm{im}\, \delta_{n+1})_{n \in
\bN}\) associated with the short exact sequence~\(\mathcal{S}\).

\begin{dfn}
The short exact sequence of chain complexes~\(\mathcal{S}\) is called
\emph{weakly \adm} if the sequence~\(\boldsymbol{\delta} \mathcal{S}\) is
of controlled growth, that is, if it satisfies
\begin{equation*}
  \lim_{x \to 1^{-}} (1-x)^2 \cdot
  \bc(\boldsymbol{\delta} \mathcal{S}) \to 0 \ ,
\end{equation*}
and is called \emph{\adm{}} if it is of controlled absolute growth, that
is, if it satisfies
\begin{equation*}
  \lim_{x \to 1^{-}} (1-x)^2 \cdot
  \bc \big( |\boldsymbol{\delta} \mathcal{S}| \big) \to 0 \ .
\end{equation*}
\end{dfn}

We record that all split short exact sequences \(0 \to A \to A \oplus
C \to C \to 0\) are \adm{} since the connecting
homomorphisms~\(\delta_n\) vanish identically whence
\(\bc(\boldsymbol{\delta} \mathcal{S}) = 0\). Every \adm{} short exact
sequence is also weakly \adm{} in view of
Lemma~\ref{orgbed5e77}(\ref{orga0c1b40}).

Admissibility is a void condition when restricted to short exact
sequences of chain complexes of controlled absolute growth:

\begin{lem}
\label{org3c584b5} Suppose that at least one of \(\mathbf{H}A\)
and~\(\mathbf{H}C\) is of controlled absolute growth. Then the short
exact sequence \(\mathcal{S}\) is \adm{}.
\end{lem}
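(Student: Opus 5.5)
The plan is a termwise comparison of \(|\boldsymbol{\delta}\mathcal{S}|\) with the absolute homology sequence of \(A\) or of \(C\), followed by Lemma~\ref{orgbed5e77}(\ref{orgc8423e5}).

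The key observation is elementary: the image of the connecting homomorphism \(\delta_{n+1} \colon H_{n+1}C \to H_{n}A\) is at once a quotient of \(H_{n+1}C\) and a submodule of \(H_{n}A\). Rank is additive on short exact sequences, since \(-\tensor_{R} Q\) is exact because \(Q\) is flat over~\(R\). Hence
\begin{equation*}
  |(\boldsymbol{\delta}\mathcal{S})_{n}| = \mathrm{rank}\, \mathrm{im}\, \delta_{n+1}
  \leq \min\bigl( \mathrm{rank}\, H_{n}A,\ \mathrm{rank}\, H_{n+1}C \bigr)
\end{equation*}
for every \(n \in \bN\).

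\emph{Case 1: \(\mathbf{H}A\) is of controlled absolute growth.} Here \(0 \leq |\boldsymbol{\delta}\mathcal{S}| \leq |\mathbf{H}A|\) termwise. We know \((1-x)^2 \bc|\mathbf{H}A| \to 0\) as \(x \to 1^{-}\), and \(R(|\mathbf{H}A|) \geq 1\). Lemma~\ref{org21dd538}(\ref{org6afd178}) gives \(R(\boldsymbol{\delta}\mathcal{S}) \geq 1\). Then Lemma~\ref{orgbed5e77}(\ref{orgc8423e5}) gives \((1-x)^2 \bc|\boldsymbol{\delta}\mathcal{S}| \to 0\), which is admissibility.

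\emph{Case 2: \(\mathbf{H}C\) is of controlled absolute growth.} Let \(\mathbf{c}\) be the shifted sequence \(c_{n} = \mathrm{rank}\, H_{n+1}C\), so that \(0 \leq |\boldsymbol{\delta}\mathcal{S}| \leq \mathbf{c}\). The one extra step is to show that \(\mathbf{c}\) inherits controlled growth. I would argue via the identity \(\mathrm{rank}\,H_{0}C + x \cdot \bc\mathbf{c} = \bc|\mathbf{H}C|\), as in the proof of Lemma~\ref{org44904eb}. This gives \(R(\mathbf{c}) = R(|\mathbf{H}C|) \geq 1\) and
\begin{equation*}
  (1-x)^2 \bc\mathbf{c} = \frac{(1-x)^2 \bc|\mathbf{H}C| - (1-x)^2\, \mathrm{rank}\, H_{0}C}{x},
\end{equation*}
which tends to~\(0\) as \(x \to 1^{-}\). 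Lemma~\ref{orgbed5e77}(\ref{orgc8423e5}) then finishes as in Case 1.

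There is no real obstacle. The only points to check are the rank inequality, which needs flatness of \(Q\) to get additivity of rank, and the handling of the index shift in Case 2; both are routine.
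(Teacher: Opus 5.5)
Your proposal is correct and follows essentially the paper's argument. You bound \(|\boldsymbol{\delta}\mathcal{S}|\) termwise by \(|\mathbf{H}A|\) or by the shifted sequence \((\mathrm{rank}\, H_{n+1}C)_{n}\), then apply Lemma~\ref{orgbed5e77}(\ref{orgc8423e5}); your explicit checks of the radius of convergence and of the index shift in Case~2 fill in details the paper treats as evident.
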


\begin{proof}
Suppose that \(\mathbf{H}C\) is of controlled absolute growth, then so
will be the sequence~\(\mathbf{a}=(a_{n})\) with \(a_n =
\mathrm{rank}\, H_{n+1} C = |\mathbf{H}C|_{n+1}\). As
\(|\boldsymbol{\delta} \mathcal{S}|_{n} = \mathrm{rank}\,
\mathrm{im}\, (\delta_{n+1} \colon H_{n+1} C \to H_{n} A)\), and as
homomorphisms cannot increase the rank, we know that
\(|\boldsymbol{\delta} \mathcal{S}| \leq \mathbf{a}\). Hence by
Lemma~\ref{orgbed5e77}~(\ref{orgc8423e5}), \(\boldsymbol{\delta}
\mathcal{S}\) is of controlled absolute growth.

If \(\mathbf{H}A\) is of controlled absolute growth, we argue
similarly that \(\mathrm{im}\, \delta_{n+1} = \ker (f \colon H_{n} A
\to H_{n} B)\) by exactness so that \(|\boldsymbol{\delta}
\mathcal{S}| \leq |\mathbf{H}A|\). By
Lemma~\ref{orgbed5e77}~(\ref{orgc8423e5}), \(\boldsymbol{\delta}
\mathcal{S}\) is of controlled absolute growth.
\end{proof}

The Lemma implies that any short exact sequence of objects of~\(\CA\)
is automatically \adm{}.

\begin{prop}
\label{orgfab7294} The AE characteristic is additive for weakly \adm{}
short exact sequences of Abel complexes. That is, if \(0 \to A \to B
\to C \to 0\) is a weakly \adm{} short exact sequence then \(\ca(A) -
\ca(B) + \ca(C) = 0\).
\end{prop}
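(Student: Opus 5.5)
We start from the power series identity~\eqref{org338421f},
\begin{equation*}
  (1-x) \cdot \bc (\boldsymbol{\delta} \mathcal{S}) =
  \bc (A) - \bc (B) + \bc (C) \ ,
\end{equation*}
which holds on the common interval of convergence. First we check that the identity makes sense for all \(x \in (0,1)\). Since \(A\), \(B\), \(C\) are Abel complexes, \(R(A), R(B), R(C) \geq 1\). Weak admissibility presupposes \(R(\boldsymbol{\delta}\mathcal{S}) \geq 1\), as this is implicit in the definition of controlled growth. (Alternatively, \(|\boldsymbol{\delta}\mathcal{S}|_n \leq \mathrm{rank}\, H_n A = |\mathbf{H}A|_n\), so Lemma~\ref{org21dd538}(\ref{org6afd178}) gives this directly, because \(R(|\mathbf{H}A|) = R(\mathbf{H}A)\).) Hence all four series converge on \((0,1)\), and the identity holds there as an identity of functions. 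It is obtained by comparing coefficients of the rank relations multiplied by \((-1)^n x^n\); the index shift of \(\mathrm{rank}\,\mathrm{im}\,\delta_n\) produces the factor \(x\), with signs giving \(1-x\) after accounting for \((-1)^n\).

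Next, multiply the identity by \((1-x)\) to get
\begin{equation*}
  (1-x)^2 \cdot \bc (\boldsymbol{\delta} \mathcal{S}) =
  (1-x)\bc (A) - (1-x)\bc (B) + (1-x)\bc (C) \ .
\end{equation*}
Now let \(x \to 1^{-}\). By the definition of the AE characteristic, the right-hand side tends to \(\ca(A) - \ca(B) + \ca(C)\), using linearity of limits as in Lemma~\ref{org8f22aeb}(\ref{org62d69ce}). By weak admissibility, the left-hand side tends to~\(0\). Equating the two limits yields \(\ca(A) - \ca(B) + \ca(C) = 0\).

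The only delicate point is the first step: making sure \(\bc(\boldsymbol{\delta}\mathcal{S})\) converges on \((0,1)\) and that the formal identity~\eqref{org338421f} is a genuine equality of functions there. Both follow from the rank bound against \(|\mathbf{H}A|\) and the absolute convergence of power series inside their radius of convergence, so I expect no real obstacle. The remainder is a routine passage to the limit.
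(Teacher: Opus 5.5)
Your proof is correct and follows the paper's argument. You multiply identity~(\ref{org338421f}) by \((1-x)\) and let \(x \to 1^{-}\): weak admissibility kills the left-hand side, while the right-hand side tends to \(\ca(A) - \ca(B) + \ca(C)\). Your extra check that \(\bc(\boldsymbol{\delta}\mathcal{S})\) converges on \((0,1)\), via the bound \(|\boldsymbol{\delta}\mathcal{S}| \leq |\mathbf{H}A|\), is sound; it makes explicit what the paper leaves implicit here and uses the same bound as in the proof of Lemma~\ref{orgf31cd94}(\ref{org492a86b}).
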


\begin{proof}
Multiplying relation~(\ref{org338421f}) by \((1-x)\) yields
\begin{equation*}
  (1-x)^2 \cdot \bc (\boldsymbol{\delta} \mathcal{S}) =
  (1-x) \cdot \bc (A) - (1-x) \cdot \bc (B) + (1-x) \cdot \bc (C)
  \ ,
\end{equation*}
and in the limit \(x \to 1^{-}\) the left-hand side vanishes by the
very definition of weakly \adm{}, while the right-hand side turns into
the alternating sum \(\ca(A) - \ca(B) + \ca(C)\).
\end{proof}

For later use, we record the following facts:

\begin{lem}
\label{orgf31cd94} Let \(\mathcal{S} \colon 0 \to A \to B \to C \to
0\) be a short exact sequence of positive chain complexes of
\(R\)-modules.
\begin{enumerate}[{\rm (a)}]
\item \label{org492a86b} If two of the three complexes \(A\), \(B\)
and~\(C\) have radius of convergence at least~1, then so does the
third.
\item \label{org7ce804f} Suppose that the short exact sequence
\(\mathcal{S}\) is weakly \adm{}. If two of the three complexes
\(A\), \(B\) and~\(C\) are Abel complexes then so is the third.
\item \label{org97b90a4} Suppose that two of the three complexes
\(A\), \(B\) and~\(C\) are of controlled absolute growth. Then so
is the third, and \(\mathcal{S}\) is \adm{}.
\item \label{org58dd786} Suppose that two of the three chain
complexes \(A\), \(B\) and~\(C\) are objects of~\(\CA\). Then so is
the third, and \(\mathcal{S}\) is \adm{}.
\end{enumerate}
\end{lem}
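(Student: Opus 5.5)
The plan is to extract everything from the exact sequences
\begin{equation*}
  0 \to \mathrm{im}\, \delta_{n+1} \to H_{n} A \to H_{n} B \to H_{n} C
  \to \mathrm{im}\, \delta_{n} \to 0 ,
\end{equation*}
which give termwise rank inequalities, and from the power series identity~(\ref{org338421f}).

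\emph{Part~(\ref{org492a86b}).} Exactness at \(H_nB\) gives
\begin{equation*}
  \mathrm{rank}\, H_n B \le \mathrm{rank}\, H_n A + \mathrm{rank}\, H_n C ,
\end{equation*}
that is, \(|\mathbf{H}B| \le |\mathbf{H}A| + |\mathbf{H}C|\). For the other two cases we use the connecting maps. Since \(\mathrm{rank}\,\mathrm{im}\,\delta_{n+1} \le \mathrm{rank}\, H_{n+1}C\), we get
\begin{equation*}
  |\mathbf{H}A|_n \le |\mathbf{H}B|_n + |\mathbf{H}C|_{n+1} .
\end{equation*}
Similarly, \(\mathrm{rank}\,\mathrm{im}\,\delta_{n} \le \mathrm{rank}\, H_{n-1}A\) gives
\begin{equation*}
  |\mathbf{H}C|_n \le |\mathbf{H}B|_n + |\mathbf{H}A|_{n-1} .
\end{equation*}
Shifting a sequence by one index does not change its radius of convergence: the series is multiplied or divided by \(x\), up to finitely many terms. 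So Lemma~\ref{org21dd538} (sum and comparison) gives the claim in all three cases.

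\emph{Part~(\ref{org97b90a4}).} Since \(A\) or \(C\) is among the two given complexes, Lemma~\ref{org3c584b5} makes \(\mathcal{S}\) \adm{}. The same three termwise inequalities then give controlled absolute growth of the third complex. For this I use Lemma~\ref{orgbed5e77}(\ref{orgc8423e5}) together with two facts about the growth condition \((1-x)^2\bc(\cdot)\to 0\). First, it is preserved under sums. Second, it is preserved under index shifts: for a nonnegative sequence \(\mathbf{a}\), the shifted sequence \(a'_n=a_{n+1}\) satisfies \(x\,\bc\mathbf{a}' = \bc\mathbf{a} - a_0\), and \(x\to 1\), so the shift is harmless. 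The case where \(A\) and \(C\) are given, so that \(B\) is the third complex, needs only the first inequality.

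\emph{Part~(\ref{org7ce804f}).} Rearrange~(\ref{org338421f}) to express the unknown \((1-x)\bc\) in terms of the two known ones plus \(\pm(1-x)^2\bc(\boldsymbol\delta\mathcal{S})\). Weak admissibility makes this extra term tend to \(0\). Hence the third limit exists and equals the appropriate alternating combination. The radius-of-convergence hypothesis is supplied by part~(\ref{org492a86b}).

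\emph{Part~(\ref{org58dd786}).} This combines the others. Part~(\ref{org97b90a4}) gives controlled absolute growth of the third complex and admissibility of \(\mathcal{S}\). Admissibility implies weak admissibility by Lemma~\ref{orgbed5e77}(\ref{orga0c1b40}), so part~(\ref{org7ce804f}) shows the third complex is an Abel complex.

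The main obstacle is the bookkeeping in part~(\ref{org97b90a4}): keeping track of the index shifts coming from the connecting homomorphism in each case. The growth conditions themselves carry no real difficulty.
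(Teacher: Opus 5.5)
Your proof is correct. Parts (b) and (d) match the paper's argument, but for (a) and (c) you take a different route.

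The paper never compares the homology sequences with each other directly. It bounds \(|\boldsymbol{\delta}\mathcal{S}|\) by \(|\mathbf{H}A|\) and by the shift of \(|\mathbf{H}C|\), and then lets identity~(\ref{org338421f}) do the work. In (a) this works because three of the four series in the identity have radius at least \(1\). In (c) it multiplies the identity by \((1-x)^2\) and argues that two vanishing terms on the right force the third to vanish. You instead bypass \(\boldsymbol{\delta}\mathcal{S}\) and use the termwise bounds \(|\mathbf{H}B| \le |\mathbf{H}A| + |\mathbf{H}C|\), \(|\mathbf{H}A|_n \le |\mathbf{H}B|_n + |\mathbf{H}C|_{n+1}\) and \(|\mathbf{H}C|_n \le |\mathbf{H}B|_n + |\mathbf{H}A|_{n-1}\). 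These, together with comparison (Lemma~\ref{orgbed5e77}(a)) and invariance under index shifts, finish the argument.

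The paper's route is more uniform, since a single identity drives (a)--(c). Yours has a genuine advantage in (c). Identity~(\ref{org338421f}) is signed, so the terms on its right are \((1-x)^2\bc(A)\), \((1-x)^2\bc(B)\) and \((1-x)^2\bc(C)\), not \((1-x)^2\bc|\mathbf{H}A|\) and so on. As written, the paper's argument therefore yields only controlled growth of the third complex, not controlled absolute growth. These two notions differ, as the paper's own example \(a_n = (-1)^n(n+1)(n+2)\) shows. Your inequalities concern absolute values from the outset, so they deliver exactly the conclusion required. To make the identity-based route rigorous, one would use the unsigned identity \((1+x)\,\bc|\boldsymbol{\delta}\mathcal{S}| = \bc|\mathbf{H}A| - \bc|\mathbf{H}B| + \bc|\mathbf{H}C|\) instead.
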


\begin{proof}
(\ref{org492a86b}) At least one of the complexes \(A\) and~\(C\) has
radius of convergence \(\geq 1\), and since \(\mathrm{im}\, \delta_{n}
= \ker f_{n-1}\) we have
\begin{equation*}
  \relax |\boldsymbol{\delta} \mathcal{S}| \leq |\mathbf{H}A|
  \quad \text{and} \quad
  \relax |\boldsymbol{\delta} \mathcal{S}| \leq \big| \mathbf{H}C[-1] \big|
\end{equation*}
(compare proof of Lemma~\ref{org3c584b5}) which means, in view of
Lemma~\ref{org21dd538}(\ref{org6afd178}), that \(R(\boldsymbol{\delta}
\mathcal{S}) \geq 1\). Thus the power series \(\bc
(\boldsymbol{\delta} \mathcal{S})\) has radius of convergence at
least~1. Since three of the four power series in equation~(\ref{org338421f})
have radius of convergence at least~1, the same is true for the
fourth.

(\ref{org7ce804f}) By the previous part, all complexes
in~\(\mathcal{S}\) have radius of convergence at least~1. Moreover,
from~(\ref{org338421f}) we obtain
\begin{equation*}
  (1-x)^2 \cdot \bc (\boldsymbol{\delta} \mathcal{S}) =
  (1-x) \cdot \bc (A) - (1-x) \cdot \bc (B) + (1-x) \cdot \bc (C) \ .
\end{equation*}
By hypotheses, the left-hand side vanishes in the limit \(x \to
1^{-}\), while two of the three summands on the right result in
well-defined finite real numbers. Hence the limit exists for the third
summand as well.

(\ref{org97b90a4}) By the first part, all three complexes have
radius of convergence at least one. Multiplying the equation from the
proof of the previous part by \((1-x)\) gives
\begin{equation*}
  (1-x) \cdot (1-x)^2 \cdot \bc (\boldsymbol{\delta} \mathcal{S}) =
  (1-x)^2 \cdot \bc (A) - (1-x)^2 \cdot \bc (B) + (1-x)^2 \cdot \bc (C)
  \ .
\end{equation*}
We already know that \(\mathcal{S}\) is \adm{} by
Lemma~\ref{org3c584b5}, so as \(x \to 1^{-}\) the left-hand side will
vanish. Two of the three terms on the right-hand side will vanish in
the limit as well, hence so does the third, proving that all three
complexes are of controlled absolute growth.

(\ref{org58dd786}) This follows immediately from parts
(\ref{org97b90a4}) and~(\ref{org7ce804f}).
\end{proof}

\section{Algebraic \(K\)-theory}
\label{sec:org0c65a60}
\label{org75802ce}

We equip the category~\(\CA{}\) of Abel complexes of controlled
absolute growth (\S\ref{org29e20d3}) with the structure of a category with
cofibrations and weak equivalences in the sense of Waldhausen
\cite[§1.2]{zbMATH03927168}, as follows:

\begin{itemize}
\item A \emph{cofibration} is an injective chain map \(f \colon C \to D\)
in~\(\CA{}\). The subcategory of \(\CA{}\) consisting of
all objects of~\(\CA{}\) and all cofibrations will be denoted
\(\mathrm{co}\CA{}\). As usual, cofibrations will be written
with the symbol "\(\tikzcd {} \rcof & {} \endtikzcd\)".
\item A \emph{weak equivalence} is a chain map in~\(\CA{}\) that is a
quasi-iso\-mor\-phism (that is, a chain map inducing isomorphisms on
all homology modules). The subcategory of~\(\CA{}\) consisting of all
objects of~\(\CA{}\) and all weak equivalences will be denoted
\(\mathrm{w}\CA{}\).
\end{itemize}

\begin{thm}
\begin{enumerate}[{\rm (a)}]
\item \label{org38c5811} With the structure above, \(\CA{}\) is a
category with cofibrations and weak equivalences in the sense of
Waldhausen \cite{zbMATH03927168} satisfying the
saturation axiom and the extension axiom.
\item \label{org350ab27} The usual mapping cylinder construction for chain
complexes provides a cylinder functor satisfying the cylinder
axiom.
\end{enumerate}
\end{thm}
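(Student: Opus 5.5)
We verify Waldhausen's axioms directly, using Lemma~\ref{orgf31cd94}(\ref{org58dd786}) as the basic closure property: whenever two terms of a short exact sequence of positive chain complexes lie in~\(\CA\), so does the third.

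\emph{Zero object and cofibrations.} The zero complex has \(\mathbf{H}0 = 0\), so it is an Abel complex of controlled absolute growth and serves as zero object. For every \(C\) the map \(0 \to C\) is injective, hence a cofibration. Isomorphisms are injective, and composites of injective maps are injective.

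\emph{Pushouts along cofibrations.} Let \(A \rightarrowtail B\) be a cofibration and \(A \to C\) any map in~\(\CA\). Form the pushout \(D = B \cup_A C\) in chain complexes, computed degreewise. Since \(A \to B\) is injective, \(C \to D\) is injective. Moreover \(D/C \cong B/A\). The quotient \(B/A\) lies in~\(\CA\) by Lemma~\ref{orgf31cd94}(\ref{org58dd786}), applied to \(0 \to A \to B \to B/A \to 0\). Applying the same lemma to \(0 \to C \to D \to B/A \to 0\) shows \(D \in \CA\). So \(D\) is the pushout in~\(\CA\) (as \(\CA\) is a full subcategory) and \(C \rightarrowtail D\) is a cofibration.

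\emph{Weak equivalences.} Quasi-isomorphisms contain all isomorphisms and are closed under composition. Saturation (two-out-of-three) holds because homology is functorial and isomorphisms satisfy two-out-of-three.

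\emph{Gluing lemma.} Suppose we have a map of pushout diagrams \(B \leftarrowtail A \to C\) and \(B' \leftarrowtail A' \to C'\) whose components are quasi-isomorphisms. The short exact sequences \(0 \to C \to D \to B/A \to 0\) and \(0 \to C' \to D' \to B'/A' \to 0\) give a map of long exact homology sequences. The map \(B/A \to B'/A'\) is a quasi-isomorphism by the five lemma applied to \(0 \to A \to B \to B/A \to 0\). Then \(D \to D'\) is a quasi-isomorphism by the five lemma once more.

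\emph{Extension axiom.} Given a map of cofibration sequences \(A \rightarrowtail B \twoheadrightarrow B/A\) in which the maps on \(A\) and on \(B/A\) are quasi-isomorphisms, the five lemma on the long exact sequences shows that the middle map is a quasi-isomorphism.

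\emph{Cylinder functor.} For \(f \colon A \to B\), take the standard mapping cylinder \(T(f)\) with \(T(f)_n = A_n \oplus A_{n-1} \oplus B_n\). It is positive because \(A\) and \(B\) are positive. The inclusion of the front end \(A \oplus B \rightarrowtail T(f)\) is injective with cokernel \(A[1]\), the suspension of~\(A\).

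We first check that \(A[1] \in \CA\). Its homology sequence is the shift \((\mathbf{H}A[1])_n = -(\mathbf{H}A)_{n-1}\), so its power series is \(-x\,\bc(A)\). Hence it is an Abel complex with characteristic \(-\ca A\), and it is of controlled absolute growth, consistent with Lemma~\ref{org44904eb}. The sum \(A \oplus B\) lies in~\(\CA\), since split sequences are covered by Lemma~\ref{orgf31cd94}(\ref{org58dd786}). Therefore \(T(f) \in \CA\), again by Lemma~\ref{orgf31cd94}(\ref{org58dd786}).

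The projection \(T(f) \to B\) is a chain homotopy equivalence, hence a weak equivalence; this is the cylinder axiom. The back inclusion \(B \rightarrowtail T(f)\) is a cofibration. Functoriality of \(T\), and the requirement that it preserves cofibrations and weak equivalences in the sense of Waldhausen's definition, are checked degreewise using the same five-lemma arguments as above.

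The expected main obstacle is exactly the closure of \(\CA\) under these constructions, namely pushouts and cylinders. Everything there reduces to Lemma~\ref{orgf31cd94}(\ref{org58dd786}) together with the observation that shifting a complex preserves the Abel and growth conditions.
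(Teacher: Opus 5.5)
Your proof is correct. For part~(a) it follows the paper's argument: $D = B \cup_A C$ lies in $\CA$ by two applications of Lemma~\ref{orgf31cd94}(\ref{org58dd786}), first to $0 \to A \to B \to B/A \to 0$ and then to $0 \to C \to D \to B/A \to 0$. The gluing lemma, saturation and extension axioms are the usual five-lemma arguments, which you write out and the paper simply cites as well known.

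The only real divergence is in part~(b), in how you show $T(f) \in \CA$. You use the sequence $A \oplus B \rightarrowtail T(f) \twoheadrightarrow A[1]$, a shift computation showing $A[1] \in \CA$, and closure under extensions. The paper instead notes that $T(f)$ is chain homotopy equivalent to the target $B$, so $\mathbf{H}T(f) = \mathbf{H}B$, and hence $T(f) \in \CA$ if and only if $B \in \CA$.

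The paper's route is a one-liner and does not rely on the closure lemma at all. Yours is the more robust Waldhausen-style argument, and it would still work in a setting without a homotopy equivalence to the target. Here, though, it is redundant: you already invoke $T(f) \simeq B$ for the cylinder axiom, and membership in $\CA$ follows from that directly.
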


\begin{proof}
(\ref{org38c5811}) Axioms Cof~1 (isomorphisms are cofibrations)
and Cof~2 (all objects are cofibrant) are trivial. Axiom Cof~3
requires that for each cofibration \(f \colon \tikzcd A \rcof & B
\endtikzcd\) and each map \(A \to C\) in~\(\CA{}\), the pushout
\begin{equation*}
  \begin{tikzcd}
    A \rcofp[f] \dar{} \po & B \dar{} \\
    C \rar{F} & D
  \end{tikzcd}
\end{equation*}
exists in~\(\CA{}\), and that \(F\) is a cofibration.

By taking the pushout in the category of non-negative chain complexes,
we obtain the following diagram of short exact sequences:
\begin{equation*}
  \begin{tikzcd}
    0 \rar{} & A \rar{f} \dar{} \po
      & B \rar{} \dar{}
      & \mathrm{coker}\,f \rar{} \dar{\cong} & 0 \\
    0 \rar{} & C \rar{F} & D \rar{} & \mathrm{coker}\,F \rar{} & 0
  \end{tikzcd}
\end{equation*}
As \(A\) and \(B\) are objects of~\(\CA\) so is \(\mathrm{coker}\,f\),
by Lemma~\ref{orgf31cd94}~(\ref{org58dd786}). Hence the isomorphic
complex \(\mathrm{coker}\,F\) is an object of~\(\CA\). Since \(C\) is
an object of~\(\CA\) by hypothesis,
Lemma~\ref{orgf31cd94}~(\ref{org58dd786}) asserts that \(D = A \cup_B
C\) is an object of~\(\CA\) as well.

Axiom Weq~1 (all isomorphisms are weak equivalences) is trivial, while
axiom Weq~2 (the gluing lemma) is known to hold in the category of
chain complexes, hence holds in particular in the present case. It is
well-known that the weak equivalences (the quasi-isomorphisms) satisfy
the saturation and extension axioms (the latter being a consequence of
the five lemma, applied to long exact homology sequences).

\medbreak

(\ref{org350ab27}) The mapping cylinder \(T(f)\) of a map \(f \colon C \to D\)
of chain complexes is chain homotopy equivalent to~\(D\). Thus
\(\mathbf{H}T(f) = \mathbf{H}D\), and \(T(f)\) is an object of~\(\CA\)
if (and only if) \(D\) is. Thus \(T(f)\) furnishes \(\CA{}\) with a
cylinder functor satisfying the cylinder axiom since the construction
is well-known to provide such a cylinder functor on the category of
all positive chain complexes.
\end{proof}

\begin{dfn}
The \emph{Abel method \(K\)-theory of~\(R\)} is defined to be the Waldhausen
\(K\)-theory space of the category~\(\CA\),
\begin{equation*}
  K^{\text{A}}(R) = \Omega |w \mathcal{S}_{\bullet} \CA| \ ,
\end{equation*}
where the letter "\(w\)" denotes the category of weak equivalences as
usual. The \emph{Abel method \(K\)-groups of~\(R\)} are defined as the
homotopy groups of the \(K\)-theory space,
\begin{equation*}
  K^{\text{A}}_{q} (R) = \pi_{q} K^{\text{A}}(R) \ .
\end{equation*}
\end{dfn}

The group \(K^{\text{A}}_0(R)\) is the abelian group generated by
symbols \([A]\), for each object \(A \in \CA{}\), subject to the
relations
\begin{itemize}
\item \([A] = [A']\) whenever there exists a weak equivalence \(A \to A'\),
\item \([B] = [A] + [B/A]\) for each cofibration \(f \colon \tikzcd A \rcof & B
  \endtikzcd\), where \(B/A = \mathrm{coker}\,(f)\).
\end{itemize}

\begin{thm}
The group \(K^{\text{A}}_{0}(R)\) is uncountable; in particular, the
Abel method \(K\)-theory of~\(R\) is non-trivial.
\end{thm}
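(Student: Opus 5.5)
The plan is to show that the AE characteristic gives a surjective group homomorphism \(K^{\text{A}}_0(R) \to \bR\). Since \(\bR\) is uncountable, this forces \(K^{\text{A}}_0(R)\) to be uncountable.

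\emph{Step 1: \(\ca\) induces a homomorphism.} Every object of \(\CA\) is an Abel complex, so \(\ca A \in \bR\) is defined for every object \(A\). I check the two defining relations of \(K^{\text{A}}_0(R)\).
\begin{enumerate}[(i)]
\item A weak equivalence \(A \to A'\) is a quasi-isomorphism, so \(\mathbf{H}A = \mathbf{H}A'\) and hence \(\ca A = \ca A'\).
\item For a cofibration \(A \rightarrowtail B\) we get a short exact sequence \(0 \to A \to B \to B/A \to 0\). Its terms lie in \(\CA\) by Lemma~\ref{orgf31cd94}(\ref{org58dd786}), which also says the sequence is \adm{}, hence weakly \adm{}. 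Proposition~\ref{orgfab7294} then gives \(\ca B = \ca A + \ca(B/A)\).
\end{enumerate}
So \([A] \mapsto \ca A\) extends to a well-defined homomorphism \(K^{\text{A}}_0(R) \to \bR\).

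\emph{Step 2: surjectivity.} For each real number \(r\) I need an object of \(\CA\) with AE characteristic \(r\). By linearity of the homomorphism it suffices to realise every \(r \in [0,1]\), since integer multiples and negatives then cover \(\bR\). Write \(r\) as the asymptotic density of a subset \(S \subseteq \bN\) of even integers scaled by \(2\). Concretely, choose a \(0/1\)-sequence \(\epsilon_k\) whose natural density (the limit of \(\frac1N\sum_{k<N}\epsilon_k\)) equals \(r\), for instance \(\epsilon_k = \lfloor (k+1)r\rfloor - \lfloor kr\rfloor\). Put \(C_{2k} = R^{\epsilon_k}\) and let all other modules and all differentials be zero. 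Then \((\mathbf{H}C)_{2k} = \epsilon_k\) and the odd terms vanish, so \(\bc(C) = \sum_k \epsilon_k x^{2k}\).

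\emph{Step 3: computing the invariants.} The Abel method is consistent with Cesàro, and \(y=x^2\) gives \((1-x)\sum \epsilon_k y^k = \frac{1}{1+x}(1-y)\sum\epsilon_k y^k\). This tends to \(\frac12 r\), because the Cesàro limit of \(\epsilon\) is \(r\) and Cesàro summability implies Abel summability with the same value. Replacing \(r\) by \(\min(2r,1)\), combined with additivity, reaches every real number. For controlled absolute growth, \(|\mathbf{H}C| \le\) the sequence \(1,0,1,0,\ldots\), whose series is \(1/(1-x^2)\). Since \((1-x)^2/(1-x^2) \to 0\), Lemma~\ref{orgbed5e77}(\ref{orgc8423e5}) gives controlled absolute growth. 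So \(C \in \CA\).

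\emph{Step 4: conclusion.} Direct sums realise sums of values, so the image of the homomorphism contains \([0,1]\). It is therefore uncountable, and hence \(K^{\text{A}}_0(R)\) is uncountable.

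The only step needing care is Step 3, where Abel summation of a density sequence must be justified. I would cite the Abelian theorem that Cesàro convergence implies Abel convergence to the same value, as the paper already did in Proposition~\ref{org09c196b} for Hölder summation. Alternatively, I could pick \(\epsilon\) as a periodic sequence to realise all rationals, and then use general densities to get a continuum of values.
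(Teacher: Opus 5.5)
Your proposal is correct and follows essentially the paper's proof. Both arguments descend \(\ca\) to a homomorphism \(K^{\text{A}}_0(R)\to\bR\) via Lemma~\ref{orgf31cd94}(\ref{org58dd786}) and Proposition~\ref{orgfab7294}, and both get an uncountable image from complexes concentrated in even degrees with prescribed Cesàro (Hölder) mean, using consistency of Abel with that method; the only differences are minor. You build the complex explicitly where the paper cites \cite{HK1}, and you get negative values from the subgroup property rather than from odd-degree complexes; also, your construction already gives \(\ca(C)=r/2\), so the image contains \([0,\tfrac12]\), which suffices, and the remark about replacing \(r\) by \(\min(2r,1)\) is unnecessary.
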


\begin{proof}
As all short exact sequences in~\(\CA\) are admissible by
Lemma~\ref{orgf31cd94}(\ref{org58dd786}), and as the AE
characteristic \(\ca\) is additive by Proposition~\ref{orgfab7294}, it
induces a well-defined group homomorphism \(K^{\text{A}}_{0}(R) \to
\bR\). From the proof of \cite[Theorem 2.12]{HK1} we know that, for
any \(r \in \bR\), there exists a positive chain complex~\(C\) having
the following properties:

\begin{itemize}
\item the "Hölder limit" (that is, the limit of the sequence of arithmetic
means) of \(\mathbf{H}C\) equals~\(r\).
\item The complex \(C\) and the sequence \(\mathbf{H} C\) are concentrated
in even degrees if \(r \geq 0\), and in odd degrees if \(r < 0\).
\end{itemize}

\noindent This complex~\(C\) is an object
of~\(\CA\). Indeed, by Proposition~\ref{org09c196b}, the first property
ensures that \(C\) is an Abel complex with \(\ca(C) = r\). The second
property implies that the Hölder limit of \(|\mathbf{H}C|\) equals the
Hölder limit of~\(\mathbf{H}C\) or its negative (\emph{viz.},
equals~\(|r|\)). Consequently, \(|\mathbf{H}C|\) is an Abel sequence
with Abel limit~\(|r|\) whence \(C\) is of controlled absolute growth.

We conclude that \(\ca \colon K^{\text{A}}_{0}(R) \to \bR\) is
surjective, so its domain must be an uncountable group.
\end{proof}

\section{Finite chain modules}
\label{sec:orgaa8680a}
Let \(\CAf\) denote the full subcategory of~\(\CA\) with objects the
Abel complexes of controlled absolute growth consisting of finitely
generated \(R\)-modules.

\begin{thm}
\label{orgbf777be}
\begin{enumerate}
\item The category \(\CAf\) is a subcategory with
cofibrations and weak equivalences of the category~\(\CA\),
and the cylinder functor on~\(\CA\) restricts to a cylinder
functor for~\(\CAf\).
\item The inclusion functor \(\iota \colon \CAf \xrightarrow{\subset}
     \CA\) induces a homotopy equivalence
\begin{equation*}
  {}|w \mathcal{S}_{\bullet} \CAf |
  \xrightarrow{\sim}
  {}|w \mathcal{S}_{\bullet} \CA |
\end{equation*}
and hence an isomorphism on \(K\)-groups.
\end{enumerate}
\end{thm}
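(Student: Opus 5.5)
For part~(1) we check that \(\CAf\) is closed under the relevant constructions inside~\(\CA\). Isomorphisms and the zero object are clear. For the pushout \(D = B \cup_A C\) along a cofibration \(A \rightarrowtail B\) with \(A,B,C \in \CAf\), we already know \(D \in \CA\) from the proof of Cof~3. In each degree \(D_n\) is a quotient of \(B_n \oplus C_n\), so it is finitely generated. Quotients \(B/A\) are also degreewise finitely generated, so \(\CAf\) is closed under cofibres as well. Weak equivalences in \(\CAf\) are just the quasi-isomorphisms of~\(\CA\) between objects of~\(\CAf\). The mapping cylinder \(T(f)\) of \(f\colon C \to D\) has chain modules \(C_n \oplus C_{n-1} \oplus D_n\), hence finitely generated ones. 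By the cylinder argument already given, \(T(f) \in \CA\), so \(T(f) \in \CAf\) and the cylinder axiom is inherited.

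For part~(2) I would apply Waldhausen's approximation theorem \cite{zbMATH03927168} to \(\iota\). Its hypotheses are that both categories satisfy saturation and have cylinder functors with the cylinder axiom (known from part~(1) and the earlier theorem), together with:
\begin{enumerate}[{\rm (App~1)}]
\item \(\iota\) reflects weak equivalences; this is trivial, since weak equivalences are quasi-isomorphisms in both categories.
\item For any \(A \in \CAf\) and any map \(g\colon A \to X\) with \(X \in \CA\), there is a cofibration \(A \rightarrowtail A'\) in \(\CAf\) and a weak equivalence \(A' \to X\) extending~\(g\).
\end{enumerate}

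To verify (App~2) I would build \(A'\) inductively by attaching cells. Since \(R\) is noetherian and we work with complexes whose homology is finitely generated, the homology modules of \(X\) must be finitely generated (otherwise no \(A'\) exists); I would check in the actual definitions whether this is automatic or part of the intended setting.

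In degree~\(n\), form \(A'\) from \(A\) by adding finitely many free generators in degree \(n\) mapping to cycles of~\(X\) that generate the cokernel of \(H_n\) of the current stage. Then add free generators in degree \(n+1\) whose boundaries are cycles in degree \(n\) generating the kernel, which is finitely generated because the modules are noetherian, and lift these via chosen null-homotopies in~\(X\). This is the standard construction of a finitely generated free resolution-type approximation. The degreewise inclusion \(A \to A'\) is then a cofibration, and \(A' \to X\) is a quasi-isomorphism, so \(\mathbf{H}A' = \mathbf{H}X\) and \(A' \in \CA\). Moreover \(A'\) is degreewise finitely generated, because at each stage only finitely many cells are attached in each degree.

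The main obstacle is exactly this hypothesis on \(H_n X\) being finitely generated. Objects of \(\CA\) only control ranks, so an \(X\) with \(H_0X = Q\) of rank one is in \(\CA\) but cannot receive a quasi-isomorphism from a complex of finitely generated modules over, say, \(R=\bZ\). I would first try to resolve this by verifying that the paper's definition of \(\CA\) intends homology (or chain modules) to be finitely generated, or by replacing quasi-isomorphisms with rational equivalences. Failing that, the argument goes through for the subcategory of \(\CA\) with finitely generated homology, and that is where I expect the real work or correction to lie.
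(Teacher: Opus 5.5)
Your route is the paper's route. Part~(1) is the closure of degreewise finitely generated complexes under pushouts along cofibrations, cofibres and mapping cylinders, which the paper dismisses as immediate. Part~(2) is Waldhausen's approximation theorem, with App~1 trivial and App~2 verified by the inductive cell-attachment that the paper imports ``verbatim'' from \cite[Theorem~7.1]{HK1}. Your details are right, \emph{provided} each \(H_nX\) is finitely generated:
\begin{itemize}
\item at each stage \(H_n\) of the partially built complex is finitely generated because \(R\) is noetherian, so finitely many cells per degree suffice;
\item \(A \to A'\) is a degreewise split injection;
\item \(\mathbf{H}A' = \mathbf{H}X\) puts \(A'\) in \(\CAf\).
\end{itemize}

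The obstruction you isolate is genuine. It is a defect of the statement as literally formulated, not a missing idea on your side. Objects of \(\CA\) are constrained only through ranks. So whenever \(R\) is not a field, the complex \(X\) with \(X_0 = Q\) and \(X_n = 0\) otherwise lies in \(\CA\): its sequence \(\mathbf{H}X = (1,0,0,\ldots)\) is Abel null and of controlled absolute growth. Yet App~2 fails for \(0 \to X\), because every complex of finitely generated modules over a noetherian ring has finitely generated homology, and \(Q\) is not a finitely generated \(R\)-module. Hence the paper's one-line appeal to the approximation theorem tacitly uses the same finiteness hypothesis you flagged. Neither argument establishes the statement for \(\CA\) as defined.

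Your first proposed repair is the right one: require every \(H_nC\) to be finitely generated in the definition of \(\CA\). This condition is stable under the two-out-of-three situations of Lemma~\ref{orgf31cd94}. In the long exact homology sequence each term sits in a short exact sequence between a quotient of one finitely generated module and a submodule of another, and \(R\) is noetherian. The condition is also stable under mapping cylinders and under~\(\Xi\). So the Waldhausen structure survives, and with that convention your proof is complete. Switching to rational equivalences would also rescue App~2, but it changes the \(K\)-theory under consideration.
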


\begin{proof}
The first part is immediate. The second part follows from the
approximation theorem; the proof of~\cite[Theorem~7.1]{HK1} applies
verbatim to verify the crucial factorisation hypothesis.
\end{proof}

Given a chain complex \(C\) of modules of finite rank, we define two
further sequences:
\begin{equation*}
  \mathbf{R}C = \big( (-1)^n \mathrm{rank}\, C_{n} \big)_{n \in \bN}
  \qquad \text{and} \qquad
  \mathbf{B}C = \big( (-1)^n \mathrm{rank}\, B_{n}C\big)_{n \in \bN} \ ,
\end{equation*}
where \(B_{n} = \mathrm{im} (d_{n+1} \colon C_{n+1} \to C_{n})\) is
the image of the boundary map as usual.

\begin{thm}
Suppose that \(C\) is a positive chain complex of finitely generated
\(R\)-modules with \(\mathbf{B}C\) of controlled growth such that
\((1-x)^2 \cdot \bc(\mathbf{B}C) \to 0\) as \(x \to 1^{-}\). Then
\(\mathbf{H}C\) is an Abel sequence (\emph{i.e.}, \(C\) is an Abel complex)
if and only if \(\mathbf{R}C\) is an Abel sequence, in which case
\(\ca(C) = \Alim \mathbf{H}C = \Alim \mathbf{R}C\).
\end{thm}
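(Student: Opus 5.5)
The plan is to derive a power series identity relating \(\bc(\mathbf{R}C)\), \(\bc(\mathbf{H}C)\) and \(\bc(\mathbf{B}C)\), in the same way that relation~(\ref{org338421f}) was obtained. Write \(Z_n = \ker(d_n\colon C_n \to C_{n-1})\). We have the short exact sequences \(0 \to Z_n \to C_n \to B_{n-1} \to 0\) and \(0 \to B_n \to Z_n \to H_n C \to 0\). Since \(R\) is noetherian and the \(C_n\) are finitely generated, all these modules have finite rank. Rank is additive on short exact sequences, because \(-\tensor_R Q\) is exact. This gives
\begin{equation*}
  \mathrm{rank}\, C_n = \mathrm{rank}\, H_n C + \mathrm{rank}\, B_n + \mathrm{rank}\, B_{n-1}
\end{equation*}
for all \(n\), with the convention \(B_{-1} = 0\), which holds since \(C\) is positive.

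Next, multiply by \((-1)^n x^n\) and sum over \(n\). The terms \((-1)^n \mathrm{rank}\, B_{n-1} x^n\) sum to \(-x\,\bc(\mathbf{B}C)\). We obtain the formal identity
\begin{equation*}
  \bc(\mathbf{R}C) = \bc(\mathbf{H}C) + (1-x)\cdot \bc(\mathbf{B}C) \ .
\end{equation*}
To make this an identity of functions on \((0,1)\), we need radii of convergence. By hypothesis \(R(\mathbf{B}C) \geq 1\), since controlled growth includes this assumption. The series \((1-x)\bc(\mathbf{B}C)\) corresponds to the sequence \(\mathbf{B}C\) minus its shift, so it also has radius \(\geq 1\) by Lemma~\ref{org21dd538}(\ref{org6863a36}). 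The shifted sequence has the same radius of convergence. Hence, by Lemma~\ref{org21dd538}(\ref{org6863a36}), \(R(\mathbf{H}C) \geq 1\) if and only if \(R(\mathbf{R}C) \geq 1\). Either direction of the claimed equivalence therefore gives all three series defined on \((0,1)\), where the identity holds pointwise.

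Finally, multiply the identity by \((1-x)\):
\begin{equation*}
  (1-x)\bc(\mathbf{R}C) = (1-x)\bc(\mathbf{H}C) + (1-x)^2 \bc(\mathbf{B}C) \ .
\end{equation*}
The last term tends to \(0\) as \(x \to 1^{-}\), by the controlled growth hypothesis. So \(\lim_{x\to1^-}(1-x)\bc(\mathbf{R}C)\) exists if and only if \(\lim_{x\to1^-}(1-x)\bc(\mathbf{H}C)\) exists, and then the two limits coincide. This is exactly the claim \(\ca(C) = \Alim \mathbf{H}C = \Alim \mathbf{R}C\).

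The only point needing care is the convergence bookkeeping in the middle step: the identity should not be used analytically before knowing that both sides converge on \((0,1)\). Everything else is the same mechanism as in the proof of Proposition~\ref{orgfab7294}.
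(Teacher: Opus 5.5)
Your proof is correct and follows the paper's argument: the same two short exact sequences give \(\mathrm{rank}\, C_n = \mathrm{rank}\, H_n C + \mathrm{rank}\, B_n C + \mathrm{rank}\, B_{n-1} C\), summing yields \(\bc(\mathbf{R}C) = \bc(\mathbf{H}C) + (1-x)\cdot\bc(\mathbf{B}C)\), and multiplying by \((1-x)\) with the controlled-growth hypothesis finishes it. Your radius-of-convergence bookkeeping, showing \(R(\mathbf{H}C) \geq 1\) if and only if \(R(\mathbf{R}C) \geq 1\), is a sound addition that the paper leaves implicit.
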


\begin{proof}
The short exact sequence \(0 \to Z_n C \to C_n \to B_{n-1}C \to 0\)
gives the relation \(\mathrm{rank}\, Z_n C + \mathrm{rank}\, B_{n-1} C
= \mathrm{rank}\, C_n\). Similarly, the short exact sequence
\(0 \to B_n C \to Z_n C \to H_n \to 0\) gives the relation
\(\mathrm{rank}\, Z_n C = \mathrm{rank}\, B_n C + \mathrm{rank}\, H_n
C\). Taken together, these yield
\begin{equation*}
  \mathrm{rank}\, B_n C + \mathrm{rank}\,  B_{n-1} C + \mathrm{rank}\, H_n C
  = \mathrm{rank}\, C_n
\end{equation*}
(where \(B_{-1}C\) is the trivial module), and by multiplying with
\((-1)^n x^n\) and summing up, we obtain
\begin{equation*}
  (1-x) \cdot \bc(\mathbf{B}C) + \bc(\mathbf{H}C) = \bc(\mathbf{R}C) \ .
\end{equation*}
We multiply with \((1-x)\) to arrive at
\begin{equation*}
  (1-x)^2 \cdot \bc(\mathbf{B}C) + (1-x) \cdot \bc(\mathbf{H}C)
  = (1-x) \cdot \bc(\mathbf{R}C) \ ;
\end{equation*}
in view of our hypothesis, the first term on the left vanishes in the
limit \(x \to 1^{-}\), so \(\Alim \mathbf{H}C\) exists if and only if
\(\Alim \mathbf{R}C\) exists, in which case the A-limits agree.
\end{proof}

\section{The map \(K_{0}(R) \to K^{\text{A}}_{0}(R)\)}
\label{sec:orgcc78cbb}
The "usual" algebraic \(K\)-theory of the ring~\(R\) is obtained as
the algebraic \(K\)-theory of the category~\(\fP(R)\) of (positive)
bounded chain complexes of finitely generated projective
\(R\)-modules; weak equivalences are the quasi-isomorphisms, and
cofibrations are injective chain maps.

Given a \emph{bounded} chain complex~\(C\) with finitely generated homology
modules we define the (unbounded) complex \(\Xi C = \bigoplus_{k \geq
0} C[2k]\). Apart from finitely many terms in low degrees, the
homology of the chain complex \(\Xi C\) has a very simple description:
Provided \(n\) is so large that the \(C\) is concentrated in degrees
below \(n/2\),
\begin{equation*}
  H_{n} (\Xi C) =
  \begin{cases}
    \bigoplus_{k \geq 0} H_{2k}   C & \text{if \(n\) is even,} \\
    \bigoplus_{k \geq 0} H_{2k+1} C & \text{if \(n\) is odd,}
  \end{cases}
\end{equation*}
the direct sums having finitely many non-zero terms only. This means
that, with the exception of finitely many terms of low degree, the
sequence \(\mathbf{H}\Xi C\) alternates between two values the sum of
which is~\(\chi(C)\). It follows that \(\ca(\Xi C) = \chi(C)/2\), see
Lemma~\ref{org44904eb} and Example~\ref{org2ff51a2}. It follows also that
the sequence \(|\mathbf{H}\Xi C|\) has a finite Abel limit (\emph{viz.},
half of the sum of the ranks of the homology modules of~\(C\)) so that
\(\mathbf{H}C\) is of controlled absolute growth.

The assignment \(C \mapsto \Xi C\) is functorial, preserves
quasi-isomorphisms, and maps injective chain maps to injective
maps. Thus we obtain an exact functor
\begin{equation*}
  \Xi \colon \fP(R) \to \CA \ , \quad
  C \mapsto \bigoplus_{k \geq 0} C[2k]
\end{equation*}
with values in the category~\(\CA\). The functor induces group
homomorphisms
\begin{equation*}
  \xi_{q} \colon K_{q}(R) \to K^{\text{A}}_{q}(R) \ .
\end{equation*}

Since \(R\) is a commutative ring it has the invariant basis number
property. Consequently, \(K_{0}(R)\) contains a canonical infinite
cyclic subgroup generated by the element~\([R]\), represented by~\(R\)
considered as a chain complex concentrated in degree~0. In other
words, we have an injective map \(\iota \colon \bZ \to K_{0}(R)\)
determined by \(1 \mapsto [R]\). Now the chain complex~\(\Xi R\) has
chain modules \(R\) in all even degrees, and~0 in all odd degrees, so
that \(\ca \Xi R = 1/2\). It follows that the composition
\begin{equation*}
  \bZ \xrightarrow{\iota} K_{0}(R)
      \xrightarrow{\xi_{0}} K^{\text{A}}_{0}(R)
      \xrightarrow{\ca} \bR
\end{equation*}
is the map \(t \mapsto t/2\), which is injective. Consequently, the
composition \(\bZ \xrightarrow{\iota} K_{0}(R) \xrightarrow{\xi_{0}}
K^{\text{A}}_{0}(R)\) must be injective. In other words:

\begin{thm}
The group \(K^{\text{A}}_{0}(R)\) contains a canonical infinite cyclic
subgroup, generated by the element represented by~\(\Xi R\). \qed
\end{thm}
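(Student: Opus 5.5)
The strategy is to detect the subgroup with the homomorphism \(\ca\colon K^{\text{A}}_0(R)\to\bR\), which is well-defined by Lemma~\ref{orgf31cd94}(\ref{org58dd786}) and Proposition~\ref{orgfab7294}. We then compute its value on the class \([\Xi R]\). Since \(\bR\) is torsion-free, a nonzero value forces the subgroup generated by \([\Xi R]\) to be infinite cyclic.

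The steps run as follows. First, check that \(\Xi\) is an exact functor \(\fP(R)\to\CA\), so that \([\Xi R]\) is a genuine element of \(K^{\text{A}}_0(R)\) and equals \(\xi_0([R])\).
\begin{itemize}
\item Exactness: \(\Xi\) preserves quasi-isomorphisms, injective maps, pushouts along cofibrations (it is a direct sum of shifts) and the zero object.
\item Landing in \(\CA\): for bounded \(C\), the sequence \(\mathbf{H}\Xi C\) is eventually alternating, and \(|\mathbf{H}\Xi C|\) is eventually two-periodic. By Lemma~\ref{org44904eb} and Example~\ref{org2ff51a2}, both are Abel sequences. Since \(|\mathbf{H}\Xi C|\) has an Abel limit, \(\Xi C\) is of controlled absolute growth.
\end{itemize}

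Second, compute \(\ca(\Xi R)\). The complex \(\Xi R\) has \(R\) in every even degree and \(0\) in every odd degree, with zero differentials. Hence \(\mathbf{H}\Xi R=(1,0,1,0,\dots)\), and Example~\ref{org2ff51a2} with \(s=1,t=0\) gives \(\ca(\Xi R)=1/2\).

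Third, conclude. The map \(\ca\) is a group homomorphism, so \(\ca(t[\Xi R])=t/2\) for every \(t\in\bZ\). This vanishes only for \(t=0\). Therefore \(t\mapsto t[\Xi R]\) is injective, and \([\Xi R]\) generates an infinite cyclic subgroup. Equivalently, the composite \(\bZ\xrightarrow{\iota}K_0(R)\xrightarrow{\xi_0}K^{\text{A}}_0(R)\) is injective, because composing further with \(\ca\) gives \(t\mapsto t/2\). The subgroup is canonical: it is the image of the canonical subgroup \(\langle [R]\rangle\subset K_0(R)\) under the natural map \(\xi_0\).

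The only step needing care is the first. One must verify that \(\ca\) descends to \(K^{\text{A}}_0\), which needs admissibility of all short exact sequences in \(\CA\), and that \(\Xi\) preserves cofibrations and pushouts. Both are already settled by the earlier lemmas and the discussion preceding the statement, so I expect no genuine obstacle.
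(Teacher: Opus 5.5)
Your proposal is correct and follows the paper's argument. You detect \([\Xi R]\) via the homomorphism \(\ca\colon K^{\text{A}}_0(R)\to\bR\), compute \(\ca(\Xi R)=1/2\), and conclude that \(\bZ\xrightarrow{\iota}K_0(R)\xrightarrow{\xi_0}K^{\text{A}}_0(R)\) is injective because its composite with \(\ca\) is \(t\mapsto t/2\); your preliminary checks (exactness of \(\Xi\), and that \(\ca\) descends to \(K^{\text{A}}_0\)) are the same ones the paper settles before the statement and in the proof that \(K^{\text{A}}_0(R)\) is uncountable.
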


If the ring~\(R\) is a commutative principal ideal domain, all
finitely generated projective modules are in fact finitely generated
free, with a well-defined rank. In this case \(\iota\) is an
isomorphism, with inverse given by the usual Euler characteristic
function \(\chi \colon C \mapsto \sum_{j \geq 0} (-1)^{j}
\mathrm{rank}\, C_{j}\). Hence we obtain the following:

\begin{thm}
\label{org10e3d52} Suppose that \(R\) is a commutative principal
ideal domain. The composite map \(\ca \circ \xi_{0} \colon K_{0}(R)
\to \bR\) equals the map \(\chi/2\) (that is, half of the classical
Euler characteristic). In particular, the map \(\xi_{0}\) is
injective. \qed
\end{thm}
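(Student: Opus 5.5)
We have to show that \(\ca\circ\xi_0=\chi/2\) on all of \(K_0(R)\), and then deduce injectivity of \(\xi_0\). The strategy is to compare both maps on the generator \([R]\) and use the structure of \(K_0(R)\) for a principal ideal domain.

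\emph{Step 1: \(\chi\) and \(\iota\) are inverse isomorphisms.} We use the standard description of \(K_0(\fP(R))\). Every object of \(\fP(R)\) is a bounded complex of finitely generated free modules, since \(R\) is a PID. A bounded complex \(C\) can be filtered by its brutal truncations \(C_{\le n}\). These inclusions are cofibrations, and their quotients are the single modules \(C_n[n]\). The additivity relation in \(K_0\) therefore gives
\[
[C]=\sum_n [C_n[n]].
\]
Next, the mapping cone of the identity on \(M[n-1]\) is contractible and contains \(M[n-1]\) as a subcomplex with quotient \(M[n]\). Hence \([M[n]]=-[M[n-1]]\). Since \(C_n\cong R^{r_n}\) is free, \([C_n[n]]=(-1)^n r_n[R]\), and so
\[
[C]=\chi(C)\cdot[R].
\]
This shows that \(\iota\) is surjective. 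It is injective by the invariant basis number argument already given in the paper. Moreover \(\chi\) is well defined on \(K_0(R)\), being additive and invariant under quasi-isomorphism because it equals the alternating sum of homology ranks. Consequently \(\chi\circ\iota=\mathrm{id}_{\bZ}\), and \(\chi\) is the inverse of \(\iota\).

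\emph{Step 2: the formula.} Both \(\ca\circ\xi_0\) and \(\chi/2\) are group homomorphisms \(K_0(R)\to\bR\); the first is well defined by the earlier results. By Step 1, \(K_0(R)\) is generated by \([R]\), so it suffices to check that the two maps agree on \([R]\). This is exactly the computation \(\ca(\Xi R)=1/2=\chi(R)/2\) from the paragraph preceding the statement. Alternatively, one can invoke directly the computation \(\ca(\Xi C)=\chi(C)/2\) from the same section, valid for every \(C\in\fP(R)\).

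\emph{Step 3: injectivity.} If \(\xi_0(a)=0\), then \(\chi(a)/2=\ca(\xi_0(a))=0\), so \(\chi(a)=0\). Since \(\chi\) is an isomorphism by Step 1, \(a=0\).

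The main obstacle is Step 1: verifying carefully that \([C]=\chi(C)[R]\) in the Waldhausen \(K_0\), in particular the sign relation obtained from the cone of the identity. Once this is established, the rest is immediate from the previous theorem's computation.
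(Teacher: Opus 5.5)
Your proof is correct and follows the paper's argument. The paper also uses that, for a PID, \(\iota\) is an isomorphism with inverse \(\chi\), and that \(\ca\circ\xi_0\circ\iota\) is \(t\mapsto t/2\); both the formula and injectivity follow from this, and your Step~1 (brutal truncations and the cone of the identity, giving \([C]=\chi(C)[R]\)) supplies the justification for surjectivity of \(\iota\) that the paper only asserts.
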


\section*{ }
\label{sec:org2450e53}
\bibliographystyle{alpha}
\bibliography{/home/huette/SYNC-notes/Maths/Euler_characteristics/chi1}

\raggedright
\end{document}